\newtheorem{theorem}{Theorem}
\newtheorem{definition}{Definition}
\newtheorem{lemma}{Lemma}
\newtheorem{proposition}{Proposition}
\begin{document}
\title{A note on primary spectrum over commutative rings}
\author{NESL\.{I}HAN AY\c{S}EN {\"{O}}ZK\.{I}R\.{I}\c{S}C\.{I}}
\thanks{Corresponding Author: N. AY\c{S}EN {\"{O}}ZK\.{I}R\.{I}\c{S}C\.{I}}
\address{Yildiz Technical University, Department of Mathematics, Esenler, \.{I}stanbul, TURKEY}
\email{aozk@yildiz.edu.tr}
\author{ZEL\.{I}HA K{\i}L{\i}\c{C}}
\address{The University of North Carolina at Chapel Hill, Chapel Hill, NC, USA}
\email{zelihakilicc@gmail.com}
\author{SUAT KO\c{C}}
\address{Marmara University, Department of Mathematics, Kad{\i}k{\"{o}}y, \.{I}stanbul, TURKEY}
\email{suat.k.oc@hotmail.com}

\maketitle

\begin{abstract}
In this work we define a primary spectrum of a commutative ring $R$ with its Zariski topology $\mathfrak{T}$. We introduce several properties and examine some topological features of this concept. We also investigate differences between the prime spectrum and our primary spectrum.
\end{abstract}

\textbf{2000}\textbf{\textit{ Mathematics Subject Classification.}} {13E99, 13A99, 13A15.}

\textbf{\textit{Key words and phrases.}} {Primary ideal, Primary spectrum, Zariski topology.}

\bigskip

\section{Introduction}\label{intro}
Let $R$ be a commutative ring with a nonzero identity. $Max(R)$, $Spec(R)$ and $Prim(R)$ denote the set of maximal, prime and primary ideals of $R$, respectively. The Zariski topology on $Spec(R)$  is defined to be the topology whose closed sets are the subsets of the form $V(I)$, denoting the set of prime idelas of $R$ containing $I$. The family $\{V(I):I$ is an ideal of $R\}$ satisfies the axioms of closed sets of a topology $\mathfrak{T}$ on $Spec(R)$. The topological space $(Spec(R), \mathfrak{T})$ is called the prime spectrum of $R.$ \\
The notion of primary spectrum on principal ideal domains was examined as a generalization of prime spectrum in [1]. The variety, i.e. the operation $V=V_{Q}$ on the subsets of $R$, considered in [1] is $ V_{Q}(E)=\{I: I$ is a primary ideal of $R$ such that $E\subset I\}$. They showed that the set of primary ideals of a principal ideal domain $R$ can be endowed with a topology called primary spectrum of $R$. \\
In this study, we define a different variety in a ring $R$ which is not necessarily a principal ideal domain. We denote our variety by $V_{rad}(I):=\{Q\in Prim(R): I\subseteq\sqrt{Q}$ for any ideal $I$ of $R\}$. The family $\{V_{rad}(I): I$ is an ideal of $R\}$ satisfies the axioms of closed sets for a topology $\mathfrak{T}$ on $Prim(R)$ called the Zariski topology on $Prim(R)$, and the space $(Prim(R), \mathfrak{T})$ is the primary spectrum of $R$ denoted by $Prim(R).$\\
In section 2, we construct the primary spectrum of a ring $R$ and compare it with its prime spectrum. Additionally, we give some examples to clarify the difference between the prime and primary spectrum. In section 3, we investigate some topological properties of $Prim(R)$ and provide various applications of those properties. \\

\bigskip

\bigskip

\section{On The Primary Spectrum Of Commutative Rings}
\definition{$Q$ is said to be a primary ideal of $R$ if $rs\in Q$ but $r\not\in Q$ implies that there exists $n\in N$ such that $s{^{n}}\in Q$ for $r,s\in R$ where $Q$ is a proper ideal of $R$.}

\definition{Let $Prim(R)$ be the  set of all primary ideal of $R.$ We define primary variety for any subset $S$ of $R$ as
\begin{center}
$V_{rad}(S)= \{Q\in Prim(R):S\subseteq\sqrt{Q}\}.$
\end{center}
}

\remark{Let $I, J$ be the ideals of $R$ and $a\in R$.
\begin{enumerate}
\item {It is clear that if $I=(S),$ then $V_{rad}(S)=V_{rad}(I).$ If $S=\{a\},$ we write $V_{rad}(a)=V_{rad}(\{a\})$ and we have $V_{rad}(a)=V_{rad}(Ra).$}
\item {If $I\subseteq J,$ then $V_{rad}(J)\subseteq V_{rad}(I).$ }
\item {$V_{rad}(I)= V_{rad}(\sqrt{I}).$}
\end{enumerate}
}
\proposition{Let $\{S_{i}: i\in \Lambda\}$ be a family of subsets of $R$ and $I, J, I_{i}$ s be ideals of $R$ where $i\in \Lambda$ for any index set $\Lambda$. Then the followings hold:
\begin{enumerate}
  \item {$V_{rad}(0_{R})=Prim(R)$ and $V_{rad}(1_{R})={\O}$ }
  \item {$V_{rad}(I\cap J)=V_{rad}(IJ)=V_{rad}(I)\cup V_{rad}(J).$ }
  \item {$V_{rad}(\bigcup_{i\in \Lambda}{S_{i}})=\bigcap_{i\in \Lambda}V_{rad}(S_{i}).$ In particular, $V_{rad}(\sum_{i\in \Lambda}{I_{i}})=\bigcap_{i\in \Lambda}V_{rad}(I_{i}).$ }
\end{enumerate}
}

\corollary{The collection $\{V_{rad}(I):I$ is an ideal of $R\}$ satisfies the axioms of closed sets of a topology $\mathfrak{T}$ on $Prim(R)$. This topology is said to be Zariski topology on $Prim(R).$ We call the topological space $(Prim(R),\mathfrak{T})$ as primary spectrum of $R.$}\\

Various examples have been provided to emphasize some differences between the prime and primary spectrum of $R$. In Example 1, we observe that the set of $V(I)$ can be finite as opposed to $V_{rad}(I)$ that is infinite for any proper ideal $I$ of a ring $R.$
\bigskip

\example{Let $R=Z.$ It is known that $Z$ is a principal ideal domain and all primary ideals of $Z$ is of the form of $(0)$ and $(p^{k}),$ where $k\in N$ and $p$ is a prime number. Consider the prime factorization of $n\in Z$ and let $n=p_{1}^{t_{1}}p_{2}^{t_{2}}...p_{m}^{t_{m}}$ for any integer $m$ and prime integers $p_{i},$ $t_{i}\in N$, $i\in \{1,2,...,m\}.$ Thus $V_{rad}(n)=\{(p_{i}^{k_{i}}): k_{i}\in N$, $i\in \{1,2,...,m\}\}$; however, $V(n)=\{(p_{i}):i\in \{1,2,...,m\}\}.$\\

Now we provide some examples of chain rings for further analyzes of our variety.
\example{Let $R=Z_{p^{m}}.$ Every ideal in $Z_{p^{m}}$ is of the form of $(p^{i}),$ where $i\le m.$ It can be readily noted that $Spec(Z_{p^{m}})=\{(p)\}$ and every ideal in $Z_{p^{m}}$ is primary, that is, $Prim(Z_{p^{m}})=\{(0)\}\cup \{(p^{i}):i<m, i\in N\}.$
Therefore, $V_{rad}(p^{i})=Prim(Z_{p^{m}})$ for any proper ideal $(p^{i})$ of $Z_{p^{m}}.$ However, $V(I)=Spec(Z_{p^{m}})$ for any proper ideal $I$ in $Z_{p^{m}}.$
\example{Let consider the Galois ring $R=Z_{p^{s}}[X]/(h(X)),$ where $h(X)$ be a monic basic irreducible polynomial of degree $m$ in $Z_{p^{s}}[X].$ It is known that chain ring $R$ can be denoted by $GR(p^{s},p^{sm}).$ (For more details, see [2].) The set of ideals of $R$ consists of principal ideals $(0),(p),(p^{2}),..., (p^{s-1}),(1_{R})$. This tells us that every proper ideal of $R$ is primary, because radical of all ideals is maximal. Hence $V_{rad}(I)=Prim(GR(p^{s},p^{sm}))$ for any proper ideal $I$ of $GR(p^{s},p^{sm})$} and it is obvious that $V(I)=\{(p)\}=Spec(GR(p^{s},p^{sm})).$\\

\textmd{Since all finite chain rings are isomorphic to the finite chain rings in the second or third example, we completed to examine the primary variety of all finite chain rings by means of those examples.}

\example{Let $k$ be a field and define $R=k[X]/(X^{n})$. It is known that $R$ is an Artinian local ring with dimension $0.$ Every ideal in $R$ is of the form $(X^{i})/(X^{n}),$ where $i\le n$ and $(X)/(X^{n})$ is the unique prime ideal of $R.$ Since $R$ is not trivial and has the property that every zero divisor in $R$ is nilpotent, $Prim(R)=\{(0)\} \cup \{(X^{i})/(X^{n}):i< n\}$ that is, every ideal of $R$ is primary. So we obtain $V_{rad}(I)=Prim(R).$ As the set $Spec(k[X])$ consists of irreducible polynomials in $k[X]$, $Spec(k[X]/(X^{n}))=\{(X)/(X^{n}):n\in N\}$. Hence $V(I)=Spec(R)$ for any proper ideal $I$ of $R.$
\section{Some properties of Primary Spectrum}

Recall that any open subset of $Prim(R)$ is of the form $Prim(R)\setminus V_{rad}(S)$ for any subset $S$ of $R.$}\\

In the following theorem we give a basis for $Prim(R).$

\theorem{Let $R$ be a ring and $X_{r}=Prim(R)\setminus V_{rad}(r).$ The family $\{X_{r}:r\in R\}$ forms a base of the Zariski topology on $Prim(R).$}
\proof{Assume that $U$ is any open set in $Prim(R).$ Then for a subset $S$ of $R$, we have $U=Prim(R)\setminus V_{rad}(S)=Prim(R)\setminus V_{rad}(\bigcup_{r\in S}\{r\})=Prim(R)\setminus \bigcap_{r\in S}V_{rad}(r)=\bigcup_{r\in S}(Prim(R)\setminus V_{rad}(r))=\bigcup_{r\in S}X_{r}.$ Thus,  $\{X_{r}\}_{r\in R}$ is a basis for the Zariski topology.\\

Note that, it follows from the definition that, $X_{0}={\O},$ $X_{1}=Prim(R)$ and more generally $X_{r}=Prim(R)$ for every unit $r\in R.$}
\theorem{Let $R$ be a ring. Then the followings hold for any $r,s\in R$ and the open sets $X_{r}$ and $X_{s}:$}
\begin{enumerate}
  \item {$(\sqrt{rR})=(\sqrt{sR})$ if and only if $X_{r}=X_{s}$,}
  \item {$X_{rs}=X_{r}\cap X_{s}$,}
  \item {$X_{r}=\O \Leftrightarrow r$ is nilpotent,}
  \item {$X_{r}$ is quasi compact.}
\end{enumerate}
\proof{
(1),(2) and (3) are obvious.
(4) Let $r,s\in R$ and $\lambda$ be an index set. Assume that $\{X_{s_{i}}:i\in\lambda\}$ is an open cover of $X_{r}.$ Therefore, $X_{r}\subseteq\bigcup_{i\in\lambda}X_{s_{i}}=\bigcup_{i\in\lambda}(Prim(R)\setminus V_{rad}(s_{i}))=Prim(R)\setminus\bigcap_{i\in\lambda}V_{rad}({s_{i}})=Prim(R)\setminus V_{rad}(\bigcup_{i\in\lambda} {s_{i}}),$ that is, $V_{rad}(\bigcup_{i\in\lambda} s_{i})\subseteq V_{rad}(r)=V_{rad}(\sqrt{rR}).$ Hence, $\sqrt{rR}\subseteq\sqrt{(\bigcup_{i\in\lambda}\{s_{i}\})}.$ Then $r^{n}\in(\bigcup_{i\in\lambda}\{s_{i}\})$ for some $n\in N.$ There exists a finite subset $\Delta\subseteq\lambda$ such that $r^{n}=\sum_{j\in\Delta}t_{j}s_{j},$ for any $t_{j}\in R$ and $j\in\Delta.$ Thus, $(rR)^{n}\subseteq(\{s_{j}:j\in\Delta\}),$ that is, $V_{rad}(\{s_{j}:j\in\Delta\})\subseteq V_{rad}(r^{n})=V_{rad}(r)$. Hence $V_{rad}(\sum_{j\in\Delta}(s_{j}))=\bigcap_{j\in\Delta}V_{rad}(s_{j})\subseteq V_{rad}(r).$ So, $Prim(R)-V_{rad}(r)\subseteq Prim(R)-\bigcap\limits_{j\in \Delta }V_{rad}\left( s_{j}\right) =\bigcup_{j\in\Delta} \left(Prim\left( R\right)-V_{rad}\left( s_{j}\right) \right)=\bigcup\limits_{j\in \Delta }X_{s_{j}}$. As a consequence, $X_{r}\subseteq \bigcup_{j\in\Delta}X_{s_{j}}.$
}
\theorem{Let $R$ be a ring. Then $Prim(R)$ is quasi-compact.}
\proof{It can be seen directly from Theorem 2(4).}\\

The following condition is given by Hwang and Chang for the prime spectrum in [3]:\\

(*) If $X_{r}\subseteq{\displaystyle\bigcup\limits_{\alpha\in\Lambda}} X_{s_{\alpha}}$ where $r,s_{\alpha}\in R$ $(\alpha\in\Lambda)$ are nonzero elements of $R$, then $X_{r}\subseteq X_{s_{\alpha}}$ for some $\alpha\in\Lambda.$

We use this property by taking $X_{r}=Prim(R)\setminus V_{rad}(r)$ for primary spectrum as below.

\begin{theorem}
Let every nonzero prime ideal of $R$ is maximal. Then $R$ satisfies (*) if and only if $R$ has at most two nonzero prime ideals.
\end{theorem}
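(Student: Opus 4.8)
The plan is to reduce the statement to a combinatorial fact about the poset of radicals of cyclic ideals. First I would record the basic translation: by Theorem 2(1), $X_r = X_s$ iff $\sqrt{rR} = \sqrt{sR}$, and by Remark 1 together with monotonicity, $X_r \subseteq X_s$ iff $V_{rad}(s)\subseteq V_{rad}(r)$ iff $\sqrt{sR}\subseteq\sqrt{rR}$. So the elements $X_r$ (for $r$ not nilpotent) are in order-reversing bijection with the distinct radicals $\sqrt{rR}$ of nonzero cyclic ideals, ordered by inclusion. Under the hypothesis that every nonzero prime ideal of $R$ is maximal, a radical ideal is an intersection of maximal ideals (plus possibly the nilradical case), so I would argue that each $\sqrt{rR}$ is a finite intersection of the prime ideals containing $r$; in fact since $X_r$ is quasi-compact (Theorem 2(4)), the relevant primes form a finite set. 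This gives the key structural picture: the radicals $\sqrt{rR}$ correspond to finite subsets of the (possibly infinite) set of nonzero prime ideals, with reverse inclusion.

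**The forward direction.**
Suppose $R$ satisfies (*). I would prove the contrapositive of "at most two nonzero primes": assume $R$ has at least three distinct nonzero (hence maximal) primes $\mathfrak{p}_1,\mathfrak{p}_2,\mathfrak{p}_3$. The goal is to exhibit nonzero elements $r, s_1, s_2,\dots$ with $X_r\subseteq\bigcup_\alpha X_{s_\alpha}$ but $X_r\not\subseteq X_{s_\alpha}$ for each $\alpha$. The natural choice is to take $r$ with $\sqrt{rR}=\mathfrak{p}_1\cap\mathfrak{p}_2\cap\mathfrak{p}_3$ (such $r$ exists because in a ring where these are comaximal, $\mathfrak{p}_1\mathfrak{p}_2\mathfrak{p}_3$ is a nonzero ideal whose radical is this intersection — I need nonzeroness, which requires a short argument, e.g. these primes are comaximal so the product is nonzero as long as the ring isn't degenerate, or pass to a suitable element), and then cover $X_r$ by the three pieces $X_{s_i}$ where $\sqrt{s_iR}=\mathfrak{p}_j\cap\mathfrak{p}_k$ for $\{i,j,k\}=\{1,2,3\}$. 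Then $V_{rad}(r)=V_{rad}(s_1)\cup V_{rad}(s_2)\cup V_{rad}(s_3)$ by Proposition 1(2), giving the cover, while no single $X_{s_i}$ contains $X_r$ since the radicals are strictly larger. This violates (*).

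**The reverse direction.**
Suppose $R$ has at most two nonzero prime ideals, all maximal. Now suppose $X_r\subseteq\bigcup_{\alpha\in\Lambda}X_{s_\alpha}$ with all elements nonzero. Taking complements and using Proposition 1(3), this says $\bigcap_\alpha V_{rad}(s_\alpha)\subseteq V_{rad}(r)$, i.e. $\sqrt{rR}\subseteq\sqrt{\sum_\alpha s_\alpha R}$. Since the only primes available are among $\mathfrak{p}_1,\mathfrak{p}_2$ (and every proper radical ideal is $\mathfrak p_1$, $\mathfrak p_2$, or $\mathfrak p_1\cap\mathfrak p_2$), a short case check on which primes contain $r$ finishes it: if $\sqrt{rR}=\mathfrak{p}_i$ for a single $i$, then some $\mathfrak p_i\supseteq$ each relevant $s_\alpha$ already forces... — more carefully, since $\sqrt{rR}$ is contained in the radical of the sum, which is an intersection of some subfamily of $\{\mathfrak p_1,\mathfrak p_2\}$, one checks that $r$'s radical, being one of the three options, is contained in the radical of a single $s_\alpha$. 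The finitely many cases ($\sqrt{rR}\in\{\mathfrak p_1,\mathfrak p_2,\mathfrak p_1\cap\mathfrak p_2\}$, and $0$) are routine once the dictionary is in place.

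**Main obstacle.**
I expect the genuine difficulty to be the nonzeroness/existence issues in the forward direction: producing an honest nonzero element $r$ whose cyclic ideal has radical exactly $\mathfrak p_1\cap\mathfrak p_2\cap\mathfrak p_3$, and likewise nonzero $s_i$ with the prescribed two-prime radicals. If $R$ is reduced this is clean, but in general one must handle the nilpotents carefully — this is presumably why the excerpt's hypotheses are phrased in terms of prime ideals being maximal rather than assuming $R$ reduced, and it is the step where I would be most careful. The topological bookkeeping (complements, Proposition 1, Theorem 2) is otherwise entirely mechanical.
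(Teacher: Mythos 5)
Your dictionary between basic open sets and radicals is inverted, and the forward direction collapses because of it. From $X_r\subseteq X_s\iff V_{rad}(s)\subseteq V_{rad}(r)$ one gets (using that primes are primary and radicals of primary ideals are prime) $X_r\subseteq X_s\iff r\in\sqrt{sR}\iff\sqrt{rR}\subseteq\sqrt{sR}$: the assignment $r\mapsto X_r$ is order-\emph{preserving} in $\sqrt{rR}$, not order-reversing as you write. Consequently, in your forward construction $\sqrt{rR}=\mathfrak{p}_1\cap\mathfrak{p}_2\cap\mathfrak{p}_3\subseteq\mathfrak{p}_j\cap\mathfrak{p}_k=\sqrt{s_iR}$ forces $X_r\subseteq X_{s_i}$ for every $i$, so your configuration satisfies the conclusion of (*) rather than violating it; indeed your identity $V_{rad}(r)=V_{rad}(s_1)\cup V_{rad}(s_2)\cup V_{rad}(s_3)$ complements to $X_r=X_{s_1}\cap X_{s_2}\cap X_{s_3}$, not to a nontrivial cover. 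There is also an existence gap you never close: quasi-compactness of $X_r$ holds for every $r$ and does not make the set of primes containing $r$ finite, and in general no element has radical equal to a prescribed intersection of three maximal ideals (in the ring of all algebraic integers every nonzero nonunit lies in infinitely many maximal ideals), so ``pass to a suitable element'' is not available. The paper's forward argument avoids both problems with a different device: given three maximal ideals $M_1,M_2,M_3$, write $a+b=1$ with $a\in M_1$, $b\in M_2$, choose $r\in M_3\setminus(M_1\cup M_2)$, and use $r=ra+rb$ to get $X_r\subseteq X_{ra}\cup X_{rb}$, while $M_1\in X_r\setminus X_{ra}$ and $M_2\in X_r\setminus X_{rb}$.

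The reverse direction is also not the routine case check you describe. With the corrected dictionary, $X_r\subseteq\bigcup_{\alpha}X_{s_\alpha}$ says exactly that every prime containing all the $s_\alpha$ contains $r$, and the delicate configuration is $r\notin P_1\cup P_2$ together with some $s_\alpha\in P_1\setminus P_2$ and some $s_\beta\in P_2\setminus P_1$: then $\bigcup_{\alpha}X_{s_\alpha}$ can be all of $Prim(R)$ while no single $X_{s_\alpha}$ contains $X_r$. Concretely, in $R=\mathbb{Z}/6\mathbb{Z}$ (two nonzero primes, both maximal) one has $X_1=\{(2),(3)\}=X_2\cup X_3$ but $X_1\nsubseteq X_2$ and $X_1\nsubseteq X_3$, so your case analysis cannot be completed as written when $r$ is a unit; this configuration is in fact an obstruction to the claimed implication itself unless such $r$ are excluded or handled separately. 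This is exactly the point where the paper's proof splits into cases ($r$ a nonunit with $r\notin P_1$, $r$ a unit, fewer primes), and it is where all the real work in this direction is concentrated, not in the topological bookkeeping.
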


\begin{proof}
Let $R$ be a ring satisfying the (*) condition. Assume that $M_{1},M_{2}$ and
$M_{3}$ are three distinct maximal ideals of $R$. Then $a+b=1$ for some $a\in
M_{1}$ and $b\in M_{2}$. Hence $r=ra+rb$ for any $r\in M_{3}-\left(  M_{1}\cup
M_{2}\right)  $. It follows that $X_{r}\subseteq
X_{ra}\cup X_{rb}$. But $X_{r}\nsubseteq X_{ra}$ and $X_{r}\nsubseteq X_{rb}$ since $M_{1}\in X_{r}-X_{ra}$ and $M_{2}\in X_{r}-X_{rb}$. So we get a contradiction.\\
For the necessary condition, let $R$ has at most two nonzero prime ideals, say
$P_{1}$ and $P_{2}$, and assume that $X_{r}\nsubseteq X_{s_{\alpha}}$ for all
$\alpha\in\Lambda$ where $r,s_{\alpha}\in R.$ Let the ideals $Q_{j,i}$
represent $P_{j}$-primary ideals in $R$ for $j=1,2$ and $i \in S,$ where $S$ is an index set. Then there exists a
$P_{j}$-primary ideal contained in $X_{r}$ but not $X_{s_{\alpha}}$ for all
$\alpha\in\Lambda$. Say $Q_{1,k}$ where $k\in S$. Without loss of generality,
take $j=1$. Then $r\notin P_{1}$, that is, every $P_{1}$-primary ideal is in
$X_{r}$. Thus $r\in P_{2}$, if $r$ is nonunit in $R$. Namely,
$Q_{2,i}\notin X_{r}$ for all $i\in S$ and so $X_{r}=\left\{  Q_{1,i}\right\}
_{i\in S}$. Since $s_{\alpha}\in P_{1}$, we obtain $
{\displaystyle\sum\limits_{\alpha\in\Lambda}}
\left(  s_{\alpha}\right)  \subseteq P_{1}$. It follows that $\left\{
Q_{1,i}\right\}  _{i\in S}\subseteq V\left({\displaystyle\sum\limits_{\alpha\in\Lambda}}
\left(  s_{\alpha}\right)  \right)  =
{\displaystyle\bigcap\limits_{\alpha\in\Lambda}}
V\left(  s_{\alpha}\right)  $. Hence, $\left\{  Q_{1,i}\right\}  _{i\in S}
\cap\left(
{\displaystyle\bigcup\limits_{\alpha\in\Lambda}}
X_{s_{\alpha}}\right)  =\varnothing.$ Consequently, we get $X_{r}\nsubseteq
{\displaystyle\bigcup\limits_{\alpha\in\Lambda}}
X_{s_{\alpha}}$. If $r$ is unit, then $X_{r}=Prim(R)$ and so $X_{r}\nsubseteq
{\displaystyle\bigcup\limits_{\alpha\in\Lambda}}
X_{s_{\alpha}}$. For the another case, that is, if $R$ has only one nonzero
prime ideal, then $r$ is unit element in $R$ with above assumptions. It
follows that $X_{r}\nsubseteq%
{\displaystyle\bigcup\limits_{\alpha\in\Lambda}}
X_{s_{\alpha}}$. The last case is that there is no nonzero prime ideal in $R$.
Thus the only maximal ideal of $R$ is zero ideal and then we get $r$ is unit
element since $r\neq0$. In this case we also obtain $s_{\alpha}=0$, that is $X_{s_{\alpha
}}=\varnothing$ for all $\alpha\in\Lambda$. Then we get $X_{r}\nsubseteq
{\displaystyle\bigcup\limits_{\alpha\in\Lambda}}X_{s_{\alpha}}$.
\end{proof}

In the following lemma, Arapovic characterizes the embeddability of a ring
into a zero-dimensional ring with two properties in [4].

\begin{lemma}
A ring $R$ is embeddable in a zero-dimensional ring if
and only if $R$ has a family of primary ideals $\left\{ Q_{\lambda }\right\}
_{\lambda \in \Lambda }$, such that:\newline
A1. $\bigcap\limits_{\lambda \in \Lambda }Q_{\lambda }=0$, and\newline
A2. For each $a\in R$, there is $n\in
\mathbb{N}$ such that for all $\lambda \in \Lambda $, if $a\in \sqrt{Q_{\lambda }}$,
then $a^{n}\in Q_{\lambda }$.
\end{lemma}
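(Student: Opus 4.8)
The plan is to establish the two implications separately; the forward direction (embeddability $\Rightarrow$ existence of the family) is the routine one. Suppose first that $R$ is a subring of a zero-dimensional ring $T$. Every prime of $T$ is maximal, and each localization $T_{P}$ (for $P\in Spec(T)$) is a zero-dimensional local ring, so its maximal ideal $PT_{P}$ coincides with its nilradical. I would take the index set $\Lambda=Spec(T)$ and put $Q_{P}:=R\cap\ker(T\to T_{P})$. The ideal $\ker(T\to T_{P})$ is $P$-primary in $T$: it is contained in $P$, its radical is $P$ because $PT_{P}$ is nil, and if $uv\in\ker(T\to T_{P})$ with $u\notin\ker(T\to T_{P})$ then the image of $v$ in the local ring $T_{P}$ is a non-unit, hence nilpotent, so a power of $v$ lands in $\ker(T\to T_{P})$. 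Contraction along $R\hookrightarrow T$ preserves primariness, so each $Q_{P}$ is a primary ideal of $R$. Property A1 follows from the injectivity of the canonical map $T\hookrightarrow\prod_{P}T_{P}$ (for $t\neq 0$ the annihilator $\mathrm{Ann}_{T}(t)$ is proper, hence lies in some maximal ideal $P$, and $t$ stays nonzero in $T_{P}$), since then $\bigcap_{P}Q_{P}=R\cap\ker(T\to\prod_{P}T_{P})=0$. For A2 I would invoke the standard fact that a ring is zero-dimensional if and only if for every element $a$ there are $n\geq 1$ and $x$ with $a^{n}=a^{n+1}x$; from such a relation $e:=(ax)^{n}$ is idempotent and $a^{n}T=eT$. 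Fixing $a\in R$ and the corresponding $n$, for each $P$ the image of $e$ in $T_{P}$ is $0$ or $1$; if $a\in\sqrt{Q_{P}}$ then $a\in P$, so $a^{n}$ is a non-unit in $T_{P}$ and the image of $e$ cannot be $1$, hence it is $0$ and $a^{n}=a^{n}e\in\ker(T\to T_{P})$, i.e. $a^{n}\in Q_{P}$. Since $n$ depends only on $a$, this is exactly A2.

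For the converse, suppose $\{Q_{\lambda}\}_{\lambda\in\Lambda}$ is a family of primary ideals satisfying A1 and A2; write $P_{\lambda}:=\sqrt{Q_{\lambda}}$ and let $n(a)$ be the integer attached to $a\in R$ by A2. The first step is to embed $R$ into a product of zero-dimensional local rings. Each $S_{\lambda}:=(R/Q_{\lambda})_{P_{\lambda}/Q_{\lambda}}\cong R_{P_{\lambda}}/Q_{\lambda}R_{P_{\lambda}}$ is local, and A2 forces every element of its maximal ideal to be nilpotent (a numerator lying in $P_{\lambda}$ has a power in $Q_{\lambda}$), so $S_{\lambda}$ is zero-dimensional; moreover $R\to R/Q_{\lambda}\to S_{\lambda}$ is injective with kernel $Q_{\lambda}$, since the elements outside $P_{\lambda}/Q_{\lambda}$ are non-zero-divisors in $R/Q_{\lambda}$. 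By A1 the diagonal map $R\to T_{0}:=\prod_{\lambda}S_{\lambda}$ is therefore injective. The obstruction is that $T_{0}$ itself is almost never zero-dimensional: an element with nilpotent coordinates of unbounded nilpotency index admits no relation $b^{n}=b^{n+1}x$. The remedy is to carve out a suitable subring. Call $R\subseteq T'\subseteq T_{0}$ bounded if for each $b\in T'$ there is an $N$ with $b_{\lambda}^{N}=0$ whenever $b_{\lambda}$ is a non-unit of $S_{\lambda}$; A2 says precisely that $R$ is bounded. Given a bounded $T'$, adjoin, for each $b\in T'$, the element $x_{b}\in T_{0}$ defined coordinatewise by $(x_{b})_{\lambda}=(b_{\lambda})^{-1}$ if $b_{\lambda}$ is a unit and $(x_{b})_{\lambda}=0$ otherwise; then $b^{N}=b^{N+1}x_{b}$ holds in $T_{0}$. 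Iterating this, starting from $R$, yields an increasing chain $R=T^{(0)}\subseteq T^{(1)}\subseteq\cdots$ with $T^{(k+1)}=T^{(k)}[\{x_{b}:b\in T^{(k)}\}]$, whose union $T$ has the property that each of its elements $b$ satisfies $b^{N}=b^{N+1}x_{b}$ with $x_{b}\in T$; hence $T$ is zero-dimensional and contains $R$.

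The step I expect to be the real obstacle is showing that boundedness survives each stage, i.e. that if $T'$ is bounded then so is $T'':=T'[\{x_{b}:b\in T'\}]$; only then do the $x_{b}$ act as quasi-inverses relative to a fixed exponent, and only then does the induction close up (without it an element of the union might have coordinates of unbounded nilpotency index and admit no such relation — which is exactly how $T_{0}$ itself fails to be zero-dimensional). The argument is a finite case analysis. Since $x_{b}x_{c}=x_{bc}$ and $x_{b}^{2}=x_{b^{2}}$, every element of $T''$ can be written $\xi=t_{0}+\sum_{j=1}^{m}t_{j}x_{c_{j}}$ with $t_{0},t_{j},c_{j}\in T'$; in a fixed coordinate $\lambda$ the summands with $c_{j,\lambda}$ a non-unit vanish, and after multiplying $\xi_{\lambda}$ by the unit $\prod_{j\in J_{\lambda}}c_{j,\lambda}$, where $J_{\lambda}=\{j:c_{j,\lambda}\text{ is a unit}\}$, one is left with the $\lambda$-coordinate of a fixed element $\eta_{J_{\lambda}}\in T'$ depending only on the subset $J_{\lambda}\subseteq\{1,\dots,m\}$. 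If $\xi_{\lambda}$ is a non-unit then so is $(\eta_{J_{\lambda}})_{\lambda}$, whence $(\eta_{J_{\lambda}})_{\lambda}^{N}=0$ for the bound $N$ furnished by boundedness of $T'$; taking the maximum of these bounds over the finitely many subsets $J\subseteq\{1,\dots,m\}$ produces a single exponent valid for $\xi$ at all relevant $\lambda$. This finiteness — only finitely many $x_{c}$ occur in a given element, hence only finitely many elements of $T'$ are involved — is what keeps the bound uniform in $\lambda$ and drives the induction. The remaining ingredients (that each $S_{\lambda}$ is as described, that contraction of a primary ideal is primary, and the standard characterization of zero-dimensional rings) are routine.
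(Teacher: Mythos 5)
This lemma is quoted in the paper from Arapovic [4] and is given no proof there, so there is nothing in the paper to compare against; judged on its own, your argument is correct and complete, and it is essentially a reconstruction of the classical Arapovic/Brewer--Richman proof. Both directions check out: in the forward direction the ideals $\ker(T\to T_{P})$ are indeed $P$-primary because $PT_{P}$ is the nilradical of the zero-dimensional local ring $T_{P}$, their contractions to $R$ satisfy A1 via the embedding $T\hookrightarrow\prod_{P}T_{P}$, and the uniform exponent in A2 comes correctly out of the relation $a^{n}=a^{n+1}x$ and the idempotent $e=(ax)^{n}$, whose image in each $T_{P}$ is forced to be $0$ whenever $a\in P$. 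In the converse, you correctly identify the genuine difficulty --- that $\prod_{\lambda}S_{\lambda}$ need not be zero-dimensional --- and your ``bounded subring'' device resolves it: the normal form $t_{0}+\sum_{j}t_{j}x_{c_{j}}$ (valid because $x_{b}x_{c}=x_{bc}$), the reduction of each coordinate to one of the finitely many elements $\eta_{J}\in T'$ indexed by subsets $J\subseteq\{1,\dots,m\}$, and the maximum of the resulting bounds give exactly the uniformity needed to keep the directed union $\pi$-regular, hence zero-dimensional. The only ingredients taken on faith (contraction of primary ideals is primary; a commutative ring is zero-dimensional iff every $a$ satisfies $a^{n}\in a^{n+1}R$) are standard and legitimately citable. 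One cosmetic remark: the phrase ``$R\to R/Q_{\lambda}\to S_{\lambda}$ is injective with kernel $Q_{\lambda}$'' should read that the composite has kernel $Q_{\lambda}$, i.e.\ that $R/Q_{\lambda}\to S_{\lambda}$ is injective, which is what your non-zero-divisor argument actually establishes.
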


The condition (A2) is very useful by the above lemma. After this work, Brewer and Richman [5] give an equivalent condition as follows:

\begin{lemma}
A family $\left\{ I_{\lambda }\right\} _{\lambda \in
\Lambda }$ of ideals in a ring $R$ satisfies (A2) if and only if for each
(countable) subset $\Gamma \subset \Lambda $, $\sqrt{\underset{\lambda \in
\Gamma }{\ \bigcap }I_{\lambda }}=\underset{\lambda \in \Gamma }{\ \bigcap }%
\sqrt{I_{\lambda }}$.
\end{lemma}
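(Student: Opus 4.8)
The plan is to prove the two implications separately, observing that one inclusion of the radical identity is automatic and that the entire content of the equivalence is the transfer of the single, $\lambda$-independent exponent $n$ between condition (A2) and a radical-of-intersection statement.

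For the forward direction, I would assume (A2) and fix a (countable) subset $\Gamma\subseteq\Lambda$. The inclusion $\sqrt{\bigcap_{\lambda\in\Gamma}I_{\lambda}}\subseteq\bigcap_{\lambda\in\Gamma}\sqrt{I_{\lambda}}$ holds for any family of ideals, since $\bigcap_{\lambda\in\Gamma}I_{\lambda}\subseteq I_{\mu}$ for every $\mu\in\Gamma$ and radicals are monotone. For the reverse inclusion, take $a\in\bigcap_{\lambda\in\Gamma}\sqrt{I_{\lambda}}$; then $a\in\sqrt{I_{\lambda}}$ for every $\lambda\in\Gamma$, so by (A2) there is a single $n\in\mathbb{N}$, depending only on $a$, with $a^{n}\in I_{\lambda}$ for all $\lambda\in\Gamma$. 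Hence $a^{n}\in\bigcap_{\lambda\in\Gamma}I_{\lambda}$, i.e. $a\in\sqrt{\bigcap_{\lambda\in\Gamma}I_{\lambda}}$. (This direction in fact needs no cardinality restriction on $\Gamma$.)

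For the converse I would argue by contraposition. If (A2) fails, there is some $a\in R$ such that for every $n\in\mathbb{N}$ one can choose an index $\lambda_{n}\in\Lambda$ with $a\in\sqrt{I_{\lambda_{n}}}$ but $a^{n}\notin I_{\lambda_{n}}$. Put $\Gamma=\{\lambda_{n}:n\in\mathbb{N}\}$, which is a countable subset of $\Lambda$. Since $a\in\sqrt{I_{\lambda_{n}}}$ for every $n$, we have $a\in\bigcap_{\lambda\in\Gamma}\sqrt{I_{\lambda}}$. Were the radical identity to hold for this $\Gamma$, we would get $a\in\sqrt{\bigcap_{\lambda\in\Gamma}I_{\lambda}}$, hence $a^{m}\in\bigcap_{\lambda\in\Gamma}I_{\lambda}\subseteq I_{\lambda_{m}}$ for some $m\in\mathbb{N}$, contradicting $a^{m}\notin I_{\lambda_{m}}$. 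So the identity must fail for some countable $\Gamma$, completing the contrapositive.

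The argument is essentially bookkeeping; the only point that deserves care is to recognise that the negation of (A2) is precisely the existence of an element $a$ together with a countable "witness" family $\{\lambda_{n}\}_{n\in\mathbb{N}}$ (one index per candidate exponent), which is exactly why it is the countable case of the radical identity that gets contradicted. I do not anticipate a genuine obstacle beyond keeping track of the fact that the exponent $n$ in (A2) must be uniform in $\lambda$ — which is the whole substance of the statement.
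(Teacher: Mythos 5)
Your proof is correct. Note that the paper itself gives no argument for this lemma --- it is quoted from Brewer and Richman [5] as a known result --- so there is nothing internal to compare against; your two implications (the uniform exponent $n$ from (A2) giving the nontrivial inclusion $\bigcap_{\lambda\in\Gamma}\sqrt{I_{\lambda}}\subseteq\sqrt{\bigcap_{\lambda\in\Gamma}I_{\lambda}}$, and the contrapositive with the countable witness set $\{\lambda_{n}\}_{n\in\mathbb{N}}$ explaining why countable subsets suffice) constitute the standard and complete argument.
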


Moreover, Brewer and Richman [5] prove the following theorem as a characterization of zero-dimensional rings.

\begin{theorem}
The following conditions are equivalent:
\newline
(i) $R$ is zero-dimensional.\newline
(ii) Condition (A2) holds for the family of all ideals of $R$.\newline
(iii) Condition (A2) holds for the family of all primary ideals of $R$.
\end{theorem}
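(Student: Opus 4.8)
The plan is to establish the cycle $(i)\Rightarrow(ii)\Rightarrow(iii)\Rightarrow(i)$. The implication $(ii)\Rightarrow(iii)$ is immediate, since the primary ideals form a subfamily of the family of all ideals of $R$, so any $n$ witnessing (A2) over all ideals witnesses it over all primary ideals. For the remaining implications I would use the classical fact that $R$ is zero-dimensional if and only if for every $a\in R$ there is an $n\in\mathbb{N}$ with $a^{n}R=a^{n+1}R$, and I would include its short proof: for the "if" part, for any prime $P$ the relation $a^{n}=a^{n+1}b$ passes to the domain $R/P$ as $\bar a^{n}(1-\bar a\bar b)=0$, forcing $\bar a=0$ or $\bar a$ a unit, so $R/P$ is a field; for the "only if" part, $R/\mathrm{nil}(R)$ is reduced and zero-dimensional, hence von Neumann regular, so from $\bar a=\bar a^{2}\bar x$ one lifts to $(a-a^{2}x)^{k}=0$ and expands $(1-ax)^{k}=1+axg$ to get $a^{k}=-a^{k+1}xg$, i.e.\ $a^{k}R=a^{k+1}R$. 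Then $(i)\Rightarrow(ii)$ follows: given $a$, pick $n$ with $a^{n}R=a^{m}R$ for all $m\ge n$; if $I$ is any ideal with $a\in\sqrt I$, then $a^{m}\in I$ for some $m\ge n$, so $a^{n}\in a^{n}R=a^{m}R\subseteq I$, and this single $n$ witnesses (A2) for the family of all ideals.

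For $(iii)\Rightarrow(i)$ I would argue contrapositively: if $\dim R\ge 1$, some prime $P$ is not maximal, so there is a maximal ideal $M$ with $P\subsetneq M$; fix $a\in M\setminus P$. It is enough to produce, for each $k\ge 1$, a primary ideal $Q_{k}$ with $a\in\sqrt{Q_{k}}$ but $a^{k}\notin Q_{k}$, because then no fixed $n$ satisfies the conclusion of (A2) for $a$ over the family of primary ideals. First, $a^{k}\notin P+(a^{k+1})$: from $a^{k}=p+ra^{k+1}$ with $p\in P$ one gets $a^{k}(1-ra)\in P$, and since $a\notin P$ (so $a^{k}\notin P$) this forces $1-ra\in P$, i.e.\ the image of $a$ in $R/P$ is a unit — impossible since that image lies in the proper ideal $M/P$. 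Hence $I:=P+(a^{k+1})$ has $a^{k+1}\in I$ (so $a\in\sqrt I$) and $a^{k}\notin I$. By Zorn's Lemma take $Q_{k}$ maximal among ideals containing $I$ and avoiding $a^{k}$; then $Q_{k}$ is irreducible, because $Q_{k}=J_{1}\cap J_{2}$ with both $J_{i}\supsetneq Q_{k}$ would force $a^{k}\in J_{1}\cap J_{2}=Q_{k}$.

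The main obstacle is the last step, namely arranging $Q_{k}$ to be \emph{primary}. When $R$ is Noetherian this is automatic, since every irreducible ideal of a Noetherian ring is primary, and then $Q_{k}$ is the desired ideal and the proof is complete. For a general commutative ring irreducibility no longer implies primary, and one must sharpen the construction — for instance localizing at $M$, separating $a^{k}$ from a suitable $MR_{M}$-primary ideal of $R_{M}$ and contracting back to $R$ — which is precisely the delicate part of Brewer and Richman's argument in [5]; consistently with the expository nature of this note, I would cite [5] for that refinement (or reproduce it).
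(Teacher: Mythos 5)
First, a point of reference: the paper does not prove this theorem at all --- it is quoted from Brewer and Richman [5] --- so there is no internal argument to measure yours against. Your implications (i)$\Rightarrow$(ii)$\Rightarrow$(iii) are complete and correct: the characterization of zero-dimensionality by $a^{n}R=a^{n+1}R$ is proved properly in both directions, the observation that the resulting $n$ depends only on $a$ (and not on the ideal $I$) is exactly what (A2) demands, and (ii)$\Rightarrow$(iii) is indeed trivial. The computation showing $a^{k}\notin P+(a^{k+1})$ is also correct.

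The gap you flag in (iii)$\Rightarrow$(i) is, however, genuine: an ideal maximal with respect to excluding $a^{k}$ is irreducible, but irreducible does not imply primary outside the Noetherian setting, so your $Q_{k}$ is not known to belong to the family that (iii) quantifies over, and the implication is only established for Noetherian $R$. The repair is shorter than you suggest and does not require reproducing the delicate part of [5]. Pass to the domain $D=R/P$, in which the image $\bar{a}$ of $a$ is a nonzero nonunit (it lies in the proper ideal $M/P$); primary ideals of $D$ pull back to primary ideals of $R$ containing $P$, with corresponding radicals, so it suffices to defeat (A2) for $\bar{a}$ in $D$. Choose a prime $P'$ of $D$ minimal over $(\bar{a})$ and set $Q_{k}=(\bar{a}^{k+1})D_{P'}\cap D$. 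Since $P'D_{P'}$ is the unique prime of $D_{P'}$ containing $\bar{a}$, the ideal $(\bar{a}^{k+1})D_{P'}$ has radical equal to the maximal ideal $P'D_{P'}$ and is therefore primary, hence so is its contraction $Q_{k}$, with $\sqrt{Q_{k}}=P'\ni\bar{a}$. If $\bar{a}^{k}\in Q_{k}$, then $s\bar{a}^{k}=r\bar{a}^{k+1}$ for some $s\notin P'$, and cancelling $\bar{a}^{k}$ in the domain $D$ gives $s=r\bar{a}\in(\bar{a})\subseteq P'$, a contradiction. Thus $\bar{a}^{k}\notin Q_{k}$ for every $k$, no single $n$ can witness (A2) for $\bar{a}$ over the primary ideals, and (iii)$\Rightarrow$(i) closes with no Noetherian hypothesis and no external citation. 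With that substitution your proof is complete and self-contained, which is more than the paper itself provides.
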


\definition{ $\xi(Y)$ denotes the intersection of all elements of $Y,$ where $Y$ is any subset of a topological space $X.$}
\theorem{Let $R$ be a zero dimensional ring and $Y\subseteq Prim(R).$ Then $V_{rad}(\xi(Y))=Cl(Y).$ Hence, $Y$ is closed if and only if $V_{rad}(\xi(Y))=Y.$}
\proof{Let $Q\in Y.$ Then $\xi(Y)\subseteq Q\subseteq\sqrt{Q}$ by definition of $\xi(Y)$. Thus, $Q\in V_{rad}(\xi(Y)),$ that is, $Y\subseteq V_{rad}(\xi(Y)).$ Hence we obtain $Cl(Y)\subseteq V_{rad}(\xi(Y)).$ For the reverse inclusion, let $V_{rad}(I)$ be a closed subset of $Prim(R)$ including $Y$. Then we have $I\subseteq \sqrt{Q}$ for all $Q\in Y$. This gives us $I\subseteq \sqrt{\xi \left( Y\right) }
$ by Theorem 5. Let $Q^{\prime } \in V_{rad}(\xi(Y))$. Then $\xi(Y)\subseteq \sqrt{Q^{\prime }}$. Hence we get  $I\subseteq \sqrt{\xi \left( Y\right) }\subseteq \sqrt{Q^{\prime }}$. This implies that $Q^{\prime }\in V_{rad}(I)$, that is, $V_{rad}\left( \xi \left(
Y\right) \right) $ is the smallest closed subset of $Prim(R)$ which
includes $Y$.}
\proposition{Let $I\in Prim(R).$ Then the followings hold:
\begin{enumerate}
\item {$Cl(\{I\})=V_{rad}(I).$}
\item {$J\in Cl(\{I\})$ if and only if $I\subseteq\sqrt{J}$ for any $J\in Prim(R).$}
\end{enumerate}
}
\proof{
\begin{enumerate}
\item{If we take $Y=\{I\},$ we obtain $Cl(\{I\})=V_{rad}(I)$ by Theorem 6. Note that $R$ is not necessarily a zero-dimensional ring because $Y$ is a singleton set.}
\item{It is an immediate consequence of (1).}
\end{enumerate}
}
\corollary{$Prim(R)$ is a $T_{0}-$space if and only if for any two ideals $I$ and $J$ in $Prim(R)$, $V_{rad}(I)=V_{rad}(J)$ implies that $I=J$.}
\remark{It is known that $Spec(R)$ is always a $T_{0}-$space for the Zariski topology, but $Prim(R)$ is not necessarily a $T_{0}-$space for any ring $R$.
\example{Let $R=Z.$ Then $Prim(Z)=\{0\}\cup\{(p^{n}): p$ is prime$\}$ and $V_{rad}(p^{n})=\{Q:(p^{n})\subseteq\sqrt{Q}\}.$ Therefore, for any $n,m\in N$ we have $Cl(\{p^{n}\})=V_{rad}(p^{n})=V_{rad}(p^{m})=Cl(\{p^{m}\}).$ However, $p^{n}\ne p^{m},$ for distinct $n, m.$  }
\example{Let $R=Z\times Z.$ It is easily proved that $Prim(R)=\{(p^{i})\times Z: p$ is prime$, i\in N\}\cup\{Z\times (q^{j}): q$ is prime$, j\in N\}.$ Furthermore, $Cl((p^{i})\times Z)=V_{rad}((p^{i})\times Z)=\{(p^{n})\times Z:(p^{i})\times Z\subseteq\sqrt{(p^{n})\times Z}=(p)\times Z\}.$ Clearly, $Cl((p^{i})\times Z)=Cl((p^{j})\times Z).$ However, $(p^{i})\times Z\ne (p^{j})\times Z,$ for $i\ne j.$}
}
\definition{[6] If every primary ideal is a maximal ideal in a ring $R$, then $R$ is called a $P$-ring.}
\begin{proposition}
$R$ is a $P$-ring if and only if $Prim(R)$ is a $T_{0}-$space.
\end{proposition}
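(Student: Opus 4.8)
The plan is to convert the topological statement into a purely ideal-theoretic one and then prove the two implications.

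First I would record an equivalent form of the $T_{0}$ condition. By the Corollary above, $Prim(R)$ is $T_{0}$ exactly when $V_{rad}(I)=V_{rad}(J)$ implies $I=J$ for all primary ideals $I,J$. For primary $I,J$ I claim $V_{rad}(I)=V_{rad}(J)$ is equivalent to $\sqrt{I}=\sqrt{J}$: ``$\Leftarrow$'' is Remark (3), while for ``$\Rightarrow$'' one uses $I\subseteq\sqrt{I}$, so $I\in V_{rad}(I)=V_{rad}(J)$, giving $J\subseteq\sqrt{I}$ and hence $\sqrt{J}\subseteq\sqrt{I}$, with the reverse inclusion by symmetry. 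Taking $J=\sqrt{I}$ (which is prime, hence primary, and has the same radical as $I$) then shows that $Prim(R)$ is $T_{0}$ if and only if every primary ideal of $R$ coincides with its radical, i.e. every primary ideal of $R$ is prime. So the proposition reduces to: \emph{$R$ is a $P$-ring if and only if every primary ideal of $R$ is prime.}

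The implication ``$P$-ring $\Rightarrow$ every primary ideal prime'' is trivial, since maximal ideals are prime; combined with the previous paragraph this gives that a $P$-ring has $T_{0}$ primary spectrum. For the converse, assume every primary ideal of $R$ is prime; it suffices to show that every prime ideal $P$ is maximal, i.e. that the domain $D:=R/P$ is a field. First note that $D$ inherits the hypothesis: a primary ideal of $D$ corresponds to a primary ideal of $R$ containing $P$, which by assumption is prime, hence the given ideal of $D$ is prime. Now suppose $D$ is not a field and choose a nonzero non-unit $a\in D$; let $\mathfrak{p}$ be a minimal prime over the ideal $(a)$. Then $\mathfrak{p}D_{\mathfrak{p}}$ is the unique prime of $D_{\mathfrak{p}}$ containing $(a)D_{\mathfrak{p}}$, so $(a)D_{\mathfrak{p}}$ and $(a^{2})D_{\mathfrak{p}}$ both have radical equal to the maximal ideal $\mathfrak{p}D_{\mathfrak{p}}$ and are therefore primary. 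Contracting to $D$ yields $\mathfrak{p}$-primary ideals of $D$, which by hypothesis are prime, hence equal to $\mathfrak{p}$; extending back, $(a)D_{\mathfrak{p}}=\mathfrak{p}D_{\mathfrak{p}}=(a^{2})D_{\mathfrak{p}}$. Thus $a=a^{2}c$ for some $c\in D_{\mathfrak{p}}$, so $a(1-ac)=0$ in the domain $D_{\mathfrak{p}}$ with $a\neq 0$, forcing $ac=1$; but $a\in\mathfrak{p}D_{\mathfrak{p}}$, a contradiction. Hence $D$ is a field, $P$ is maximal, and every primary ideal --- being prime --- is maximal, so $R$ is a $P$-ring.

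The heart of the argument, and the step I expect to be the main obstacle, is this last paragraph: promoting ``every primary ideal is prime'' to ``every prime ideal is maximal''. The naive attempt, namely that for a non-maximal prime $P\subsetneq M$ the power $M^{2}$ is $M$-primary hence prime hence $M^{2}=M$, does not by itself give a contradiction, because a non-finitely-generated maximal ideal can be idempotent. Passing to the localization at a minimal prime over a principal ideal sidesteps this, since there the maximal ideal is principal and the equality $(a)=(a^{2})$ is available. The remaining ingredients --- stability of the primary property under contraction and extension along a localization, and the identity $\sqrt{(a)}=\sqrt{(a^{2})}$ --- are routine.
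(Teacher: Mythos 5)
Your proof is correct, and it is actually more complete than the one in the paper. The first half of your argument --- reducing the $T_{0}$ property to the statement that every primary ideal equals its radical, via $V_{rad}(I)=V_{rad}(J)\Leftrightarrow\sqrt{I}=\sqrt{J}$ and the choice $J=\sqrt{I}$ --- is essentially the paper's argument, which compares $Cl(\{Q\})=V_{rad}(Q)=V_{rad}(\sqrt{Q})=Cl(\{\sqrt{Q}\})$ and concludes $Q=\sqrt{Q}$. The difference lies in the last step: the paper jumps directly from ``every primary ideal is prime'' to ``$R$ is a $P$-ring,'' even though the definition of a $P$-ring demands that every primary ideal be \emph{maximal}; this gap is exactly the nontrivial implication that a ring whose primary ideals are all prime is zero-dimensional, which is the content of the cited reference [6] (Satyanarayana) but is neither quoted nor proved in the text. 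You supply a self-contained proof of that implication: passing to $D=R/P$, localizing at a minimal prime $\mathfrak{p}$ over a principal ideal $(a)$ so that $(a)D_{\mathfrak{p}}$ and $(a^{2})D_{\mathfrak{p}}$ are $\mathfrak{p}D_{\mathfrak{p}}$-primary, forcing $(a)D_{\mathfrak{p}}=(a^{2})D_{\mathfrak{p}}$ and hence a contradiction in the domain $D_{\mathfrak{p}}$. Your remark that the naive route via $M^{2}=M$ fails for idempotent non--finitely-generated maximal ideals is well taken, and the localization detour handles it correctly. In short: same reduction as the paper, plus an honest proof of the step the paper leaves to an uncited external theorem.
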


\begin{proof}
The necessary condition is clear. For the sufficient condition let $Prim(R)$ be a $T_{0}-$ space and $ Q \in Prim(R)$. Then we have
$Cl(\{Q\}) =V_{rad}\left( Q\right) =V_{rad}\left( \sqrt{Q}\right)=Cl(\{\sqrt{Q}\}) $. This gives us $Q=\sqrt{Q}\ $by the assumption. Hence we obtain $Q\ $is a prime ideal and $R\ $is a $P$-ring.
\end{proof}

\proposition{$Prim(R)$ is a $T_{2}-$space if and only if $R$ is a $P$-ring.}
\proof{Let $Prim(R)$ be a $T_{2}$-space. Then $Prim(R)$ is a $T_{0}$-space and so we have $R$ is a $P$-ring by Proposition 3. Conversely, let $R$ be a $P$-ring. Then $Spec(R)$ coincides with $Prim(R).$ Since $R$ is zero dimensional, $Spec(R)$ is a $T_{2}$-space and so is $Prim(R)$.}\\

We get the corollary given below by combining last two propositions:

\corollary{The following statements are equivalent:

(i)\ $R$ is a $P$-ring.

(ii)\ $Prim(R)$ is a $T_{2}$-space.

(iii)\ $Prim(R)$ is a $T_{1}$-space.

(iv)\ $Prim(R)$ is a $T_{0}$-space.

\definition{A topological space $X$ is irreducible if $X$ is nonempty and $X$ can not be expressed as a union of two proper closed subsets of $X.$ Equivalently, $X$ is irreducible if $X$ is nonempty and any two nonempty open subsets of $X$ intersect.}
\proposition{$V_{rad}(I)$ is an irreducible closed subset of $Prim(R)$ for $I\in Prim(R).$}
\proof{It is an immediate consequence of Proposition 2(1).}
\begin{theorem}
$Prim(R)$ is irreducible if and only if nilradical of $R$, $N(R)$, is a primary ideal of $R.$
\end{theorem}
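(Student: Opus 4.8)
The plan is to prove the two implications separately, the common thread being that the nilradical is always a radical ideal: $\sqrt{N(R)}=N(R)$, and hence $V_{rad}(N(R))=V_{rad}(\sqrt{0_{R}})=V_{rad}(0_{R})=Prim(R)$ by Remark 1(3) and Proposition 1(1).

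First I would dispose of the ``if'' direction. Suppose $N(R)$ is a primary ideal, so that $N(R)\in Prim(R)$. Then, by the computation above, $Prim(R)=V_{rad}(N(R))$ is a closed set of the form $V_{rad}(I)$ with $I\in Prim(R)$, hence irreducible by the Proposition asserting that every such $V_{rad}(I)$ is an irreducible closed subset of $Prim(R)$ (equivalently, $Prim(R)=Cl(\{N(R)\})$ is the closure of a single point, which is always irreducible, and it is nonempty since $N(R)$ is a proper ideal).

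For the ``only if'' direction I would argue contrapositively. Assume $N(R)$ is not primary. Since $N(R)=\sqrt{N(R)}$ and any primary ideal that coincides with its own radical is prime, $N(R)$ fails to be a prime ideal; so there exist $a,b\in R$ with $ab\in N(R)$ but $a\notin N(R)$ and $b\notin N(R)$. Because $ab$ is nilpotent, Theorem 2(3) (with $X_{r}=Prim(R)\setminus V_{rad}(r)$) gives $V_{rad}(ab)=Prim(R)$, and then Proposition 1(2) together with Remark 1(1) yields $Prim(R)=V_{rad}(ab)=V_{rad}(a)\cup V_{rad}(b)$. On the other hand, since $a$ and $b$ are not nilpotent, Theorem 2(3) also tells us that $V_{rad}(a)\neq Prim(R)$ and $V_{rad}(b)\neq Prim(R)$; thus $Prim(R)$ is a union of two proper closed subsets and is therefore reducible. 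This contradicts the assumed irreducibility of $Prim(R)$, completing the contraposition.

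I expect no serious obstacle here: the statement essentially reduces to the two routine observations that ``$N(R)$ is primary'' is equivalent to ``$N(R)$ is prime'' (thanks to radicality) and that $V_{rad}(r)=Prim(R)$ holds exactly when $r$ is nilpotent (Theorem 2(3)). The only points to be careful about are citing the correct earlier results and confirming $Prim(R)\neq\varnothing$ wherever irreducibility is invoked, which is automatic since $N(R)\in V_{rad}(N(R))$ in the first direction and $Prim(R)$ is assumed irreducible (hence nonempty) in the second.
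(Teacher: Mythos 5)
Your proposal is correct and follows essentially the same route as the paper: your contrapositive for the ``only if'' direction is the paper's argument verbatim up to complementation (the same $a,b\notin N(R)$ with $ab\in N(R)$, giving $X_{a}\cap X_{b}=X_{ab}=\varnothing$, equivalently $Prim(R)=V_{rad}(a)\cup V_{rad}(b)$ with both sets proper). For the ``if'' direction you package the argument as $Prim(R)=V_{rad}(N(R))=Cl(\{N(R)\})$ and cite the proposition that such sets are irreducible, which is just a streamlined restatement of the paper's direct verification that $N(R)$ lies in every nonempty open set, so the underlying idea (namely that $N(R)$ is a generic point) is the same.
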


\begin{proof}
Suppose that $N(R)$ is a primary ideal of $R.$ Let $U$ and $V$ be nonempty open subsets of $Prim(R)$. Then, there exist $Q_{U},Q_{V}\in Prim(R)$ such that $Q_{U}\in U$ and $Q_{V}\in V$. Also we have $U=Prim(R)-V_{rad}(E)$ for $E\subseteq R$  since $U$ is open. Then we get $Q_{U}\notin V_{rad}(E)$, that is, $E\nsubseteq \sqrt{Q_{U}}.$ Hence $E\nsubseteq \sqrt{N(R)}$ because $N(R)\subseteq \sqrt{Q_{U}}\ $. This implies that $N(R)\notin V_{rad}\left( E\right),$ namely, $N(R)\in U.$ Similarly, $N(R)\in V$. Hence, we obtain $U\cap V\neq \emptyset .\
$Suppose that $N(R)\ $is not primary. Thus $N(R)$ is not prime. This means that there exist $a,b\in R\ $such
that $a,b\in R-N(R)\ $and $ab\in N(R).\ $Then we have $X_{a}\neq \emptyset \ $, $X_{b}\neq \emptyset \ $and $X_{ab}=\emptyset .\ $But, $X_{a}\cap
X_{b}=X_{ab}=\emptyset \ $for nonempty two open sets $X_{a}\ $and $X_{b}.\ $ Consequently, $Prim(R)$ is not irreducible.
\end{proof}
\definition {[7] A ring $R$ is said to be a $W$-ring if each ideal of $R$ may be uniquely represented as an intersection of finitely many primary ideals.}
\definition{A topological space $X$ is sober if every irreducible closed subset has a unique generic point.}
\theorem{Let $R$ be a $W$-ring. Then $Prim(R)$ is a $T_{0}-$space if and only if it is a sober space. }
\proof{Let $I,J$ be elements of $Prim(R).$ It is sufficient to prove that if $Prim(R)$ is a $T_{0}-$space, then it is sober. By Proposition 2(1), we have $V_{rad}(I)=Cl(\{I\})$. Since $R$ is a $W$-ring, every irreducible closed subspace of $Prim(R)$ is in the form of $V_{rad}(I)$ by Proposition 5. Suppose that $V_{rad}(I)=Cl(\{J\})$ for any $J\in Prim(R).$ Thus $I=J,$ since $Prim(R)$ is a $T_{0}-$space, that is, $V_{rad}(I)$ has a unique generic point.}
\definition{A topological space $X$ is spectral if,
\begin{enumerate}
  \item $X$ is quasi-compact,
  \item $X$ is sober,
  \item The family of quasi-compact open subsets of $X$ is closed under finite intersection and a base for the topology.
\end{enumerate}
}
\theorem{Let $R$ be a $W$-ring. Then $Prim(R)$ is a spectral space if and only if $Prim(R)$ is a $T_{0}-$space.}
\proof{Let $R$ be a $T_{0}-$space. We know that $Prim(R)$ is quasi-compact by Theorem 3 and $Prim(R)$ is a sober space by Theorem 8.  Additionally, the family $F=\{X_{r}:r\in R\}$ is quasi-compact open subsets of $Prim(R)$ by Theorem 2(4) and forming a base of a topology on $Prim(R)$. Also, this family is closed under finite intersection by Theorem 2(2).}

\begin{definition}
A topological space $X$ is supercompact if every open covering of $X$ contains $X.$
\end{definition}
We inspire from [8, Theorem 3.2] to prove following theorem.
\begin{theorem}
A ring $R$ is local if and only if $Prim(R)$ is a supercompact space.
\end{theorem}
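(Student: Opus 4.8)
The plan is to reduce the (non-standard) supercompactness condition to a purely ideal-theoretic statement by means of Proposition 2, and then to recognize that statement as saying exactly that $R$ is local. Note first that $Prim(R)\neq\varnothing$, since $R$ has a maximal ideal and every maximal ideal, being prime, is primary.

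The first step is to unwind the definition. I claim that $Prim(R)$ is supercompact if and only if there is a point $Q_{0}\in Prim(R)$ whose only open neighborhood is $Prim(R)$ itself. Indeed, if such a $Q_{0}$ exists then in any open cover of $Prim(R)$ some member contains $Q_{0}$, and that member is forced to equal $Prim(R)$; conversely, if no such point exists, the family of all proper open subsets is an open cover of $Prim(R)$ containing no member equal to $Prim(R)$. Passing to complements, $Q_{0}$ having no proper open neighborhood is the same as $Q_{0}$ lying in every nonempty closed subset of $Prim(R)$, and this in turn is equivalent to $Q_{0}\in Cl(\{Q\})$ for every $Q\in Prim(R)$ (the nontrivial direction: any nonempty closed set $C$ contains some $Q$, hence contains $Cl(\{Q\})\ni Q_{0}$). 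By Proposition 2(1),(2), $Q_{0}\in Cl(\{Q\})$ is equivalent to $Q\subseteq\sqrt{Q_{0}}$. So the whole condition becomes: there exists a primary ideal $Q_{0}$ of $R$ such that $Q\subseteq\sqrt{Q_{0}}$ for every $Q\in Prim(R)$.

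The two implications are then short. If $R$ is local with maximal ideal $\mathfrak{m}$, take $Q_{0}=\mathfrak{m}$, which is prime (hence primary) with $\sqrt{\mathfrak{m}}=\mathfrak{m}$; for any $Q\in Prim(R)$ the ideal $\sqrt{Q}$ is prime and proper, hence contained in $\mathfrak{m}$, so $Q\subseteq\sqrt{Q}\subseteq\mathfrak{m}=\sqrt{\mathfrak{m}}$, and supercompactness follows. Conversely, suppose such a $Q_{0}$ exists. Since $Q_{0}$ is a proper ideal, $\sqrt{Q_{0}}$ is proper as well. Every maximal ideal $\mathfrak{m}$ of $R$ is primary, so $\mathfrak{m}\subseteq\sqrt{Q_{0}}$; as $\sqrt{Q_{0}}$ is proper and $\mathfrak{m}$ is maximal, this forces $\mathfrak{m}=\sqrt{Q_{0}}$. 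Since this holds for every maximal ideal, $R$ has a unique maximal ideal, i.e. $R$ is local.

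The only delicate point is the first step, namely extracting from the definition ``every open covering contains $X$'' the existence of a single point lying in every nonempty closed set; once that translation is made, Proposition 2 and the elementary fact that the radical of a proper primary ideal is a proper (prime) ideal finish the argument with no real computation. I would also remark that this mirrors the spectral analogue [8, Theorem 3.2], with $V_{rad}$ in place of $V$ and $\sqrt{Q_{0}}$ playing the role of the distinguished point.
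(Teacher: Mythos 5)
Your proof is correct; the delicate first step (supercompactness of a nonempty space is equivalent to the existence of a point whose only open neighborhood is the whole space, i.e.\ a point lying in every nonempty closed set) is argued properly, and the translation via Proposition 2 into the condition ``there exists $Q_{0}\in Prim(R)$ with $Q\subseteq\sqrt{Q_{0}}$ for all $Q\in Prim(R)$'' is exactly right. Your route differs from the paper's in organization. The paper argues with open covers directly in both directions: for the forward implication it notes that the unique maximal ideal $M$ lies in $V_{rad}(I_{i})$ for every proper $I_{i}$, so no cover by proper open sets can reach $M$ (this is in substance your ``point in every nonempty closed set,'' just not isolated as a separate equivalence); for the converse it builds the specific cover $\{Prim(R)\setminus V_{rad}(M_{i})\}$ indexed by the maximal ideals and invokes supercompactness, which requires knowing that $\bigcap_{M_{i}\in Max(R)}V_{rad}(M_{i})=\varnothing$ when there are at least two maximal ideals (a fact the paper asserts rather than checks; it follows because the radical of a primary ideal is a single proper prime and cannot contain two distinct maximal ideals). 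Your version instead extracts once and for all the ideal-theoretic criterion and then, in the converse, shows every maximal ideal must equal $\sqrt{Q_{0}}$; this avoids constructing any cover, makes the role of Proposition 2 explicit, and yields as a byproduct the clean reformulation ``$Prim(R)$ is supercompact iff some primary ideal's radical contains every primary ideal,'' while the paper's argument is shorter on the forward direction and stays closer to the definition.
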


\begin{proof}
Let $M$ be the unique maximal ideal of $R$ and $\left\{U_{i}\right\} _{i\in \Delta }\ $be an open covering of $Prim(R)$ such that $U_{i}\neq Prim(R)$ for all $i\in \Delta .\ $
As $U_{i}$ is open for all $i\in \Delta$, there exists a proper ideal $I_{i}\ $of $R\ $such that $U_{i}=Prim(R)-V_{rad}\left(I_{i}\right).$ Since $M$ is the only maximal ideal of $R$, $I_{i}\subseteq M\subseteq \sqrt{M}$. Thus, $M\notin U_{i}\ $for all $i\in \Delta \ $but $M\in Prim(R),$ that is a contradiction. Conversely, let $Prim(R)$ be a supercompact space. Assume that $\left\{
M_{i}\right\} _{i\in \Delta }\ $is a family of maximal ideals of $R\ $with $\left\vert
\Delta \right\vert >1.\ $Then, we have $\bigcap\limits_{M_{i}\in Max\left( R\right) }V_{rad}\left( M_{i}\right)=\emptyset \ $. Hence, $Prim(R)=Prim(R)-\bigcap\limits_{M_{i}\in Max\left( R\right) }V_{rad}\left( M_{i}\right)=\bigcup\limits_{M_{i}\in Max\left( R\right) }\left( Prim(R)-V_{rad}\left( M_{i}\right) \right) .\ $Therefore, there exists an $M_{j}\in Max\left( R\right) \ $such that $Prim(R)=Prim(R)-V_{rad}\left( M_{j}\right) \ $. Hence we obtain that $V_{rad}\left(M_{j}\right) =\emptyset ,\ $which is a contradiction.
\end{proof}

\end{document}